\def\polhk#1{\setbox0=\hbox{#1}{\ooalign{\hidewidth
  \lower1.5ex\hbox{`}\hidewidth\crcr\unhbox0}}}
\newtheorem{lemma}{Lemma}
\newtheorem{definition}{Definition}
\newtheorem{theorem}{Theorem}
\newtheorem{corollary}{Corollary}
\theoremstyle{remark}
\newtheorem{example}{Example}
\newtheorem{remark}{Remark}
\newcommand{\St}{^{\textrm{st}}}
\newcommand{\Nd}{^{\textrm{nd}}}
\newcommand{\Rd}{^{\textrm{rd}}}
\newcommand{\Th}{^{\textrm{th}}}
\newcommand{\Span}[1]{\mathrm{Span}\left\{#1\right\}}
\newcommand{\graph}[1]{\mathrm{graph}\left(#1\right)}
\newcommand{\virg}[1]{``#1''}
\newcommand{\Gr}{\mathrm{Gr\,}}
\newcommand{\im}{\mathrm{im\,}}
\newcommand{\osc}{\mathrm{osc\,}}
\newcommand{\R}{\mathbb{R}}
\renewcommand{\P}{\mathbb{P}}
\newcommand{\C}{\mathcal{C}} 
\newcommand{\I}{\mathcal{I}} 
\newcommand{\SSigma}{\boldsymbol{\Sigma}}  
\newcommand{\E}{\mathcal{E}} 
\begin{document}




\title[Non--maximal integral elements of the Cartan plane]{Remarks on non--maximal integral elements of the Cartan plane in jet spaces}
\author[M. B\"achtold \and G. Moreno]%
{Michael B\"achtold* \and Giovanni Moreno**}

\newcommand{\acr}{\newline\indent}

\address{\llap{*\,}Lucerne University of Applied Sciences and Arts\acr
Technikumstrasse 21\acr
CH--6048 Horw, Switzerland}
\email{michael.baechtold@hslu.ch}

\address{\llap{**\,}Mathematical Institute in Opava\acr
Silesian University in Opava\acr
Na Rybnicku 626/1, 746 01 Opava, Czech Republic.}
\email{Giovanni.Moreno@math.slu.cz}

 \thanks{The second author is thankful to the   Grant Agency of the Czech Republic (GA \v CR)
for financial support under the project P201/12/G028, as well as to the organizers of the 2013 edition of the Workshop on Geometry of PDEs and Integrability (Teplice nad Be\v{c}vou, Czech Republic) for giving him the opportunity to discuss the problem hereby dealt with. The authors are indebted  to the MathOverflow forum for   providing crucial   hints and clues, and appreciated the meticulous examination of the manuscript carried out by  the anonymous referee.}

\begin{abstract}
There is a natural filtration on the    space  of degree--$k$ homogeneous polynomials  in $n$ independent variables with coefficients in the algebra of smooth functions on the Grassmannian   $\Gr(n,s)$, determined by the tautological bundle.  
In this paper we show that the space of $s$--dimensional integral elements of a Cartan plane on $J^{k-1}(E,n)$, with $\dim E=n+m$,  has an affine bundle structure modeled by the  the so--obtained   bundles over $\Gr(n,s)$,   
and we study  a natural distribution associated with it. As an example, we show that a third--order nonlinear PDE of Monge--Amp\`ere type is not contact--equivalent to a quasi--linear one.
\end{abstract}

\subjclass[2010]{14M15, 35A99, 35A30, 35B99, 57R99, 58A20}

\maketitle

\tableofcontents

\section{Introduction}

Let $\Gr(n,s)$ be the Grassmannian manifold of   $s$--dimensional vector subspaces of $L:=\R^n$. The algebra 
\begin{equation}\label{eqAlgebra}
S^\bullet T^\ast=\C^{\infty}(\Gr(n,s))\otimes_\R S^\bullet L^\ast
\end{equation}
 of polynomials functions on $L$ with coefficients in the algebra $\C^{\infty}(\Gr(n,s))$, regarded as a $\C^{\infty}(\Gr(n,s))$--module, is nothing but the  module   of smooth sections of the full symmetric algebra of the dual of the trivial bundle
\begin{equation}\nonumber
\xymatrix{
T:=\Gr(n,s)\times L\ar[d]\\
\Gr(n,s).
}
\end{equation}
Here, as everywhere else in this paper, we took the liberty of using  the same symbol both for the total space and for the module of sections of a vector bundle. Moreover, since the bundle--theoretic features of   $S^\bullet T^\ast$ will be exploited only through  its homogeneous components, we keep calling a \virg{bundle} even  such  an infinite--rank module.    \par
Recall that the \emph{tautological bundle}  $\SSigma\subseteq T$, which   is defined by\footnote{By convention, we shall denote by $E_p$ the fiber at $p$ of a bundle $E$: this should clarify the curious  formula \eqref{eqTautologica} above.}
\begin{equation}\label{eqTautologica}
\SSigma_\Sigma=\Sigma\quad\forall\Sigma\in \Gr(n,s),
\end{equation}
fits into the so--called \emph{universal    sequence}
 \begin{equation}\label{eqSU}
\xymatrix{
\SSigma\ar[r]\ar[dr] ^{}& T\ar[r]\ar[d]^{} & \frac{T}{\SSigma}\ar[dl]\\
&\Gr(n,s).&
}
\end{equation}
Much as a submanifold $N\subseteq M$ determines a filtration\footnote{This is the so--called \virg{$\mu$--adic filtration} exploited  in the geometric theory of singularities of PDEs \cite{MR966202}.} of the smooth function algebra $C^\infty(M)$ by the powers of the ideal $\mu_N$ of the functions vanishing on $N$, the sub--bundle  \eqref{eqTautologica}  determines a  filtration of  the symmetric algebra \eqref{eqAlgebra}.  The key step is to introduce the sequence \eqref{eqSUdual} below, which can be regarded as a sort of  \virg{dual} of the     sequence of bundles \eqref{eqSU}: 
 \begin{equation}\label{eqSUdual}
\xymatrix{
S^\bullet\SSigma^\ast\ar[dr] ^{}&S^\bullet T^\ast\ar[l]\ar[d]^{} & \mu_{\SSigma}\ar[dl]^{}\ar[l]\\
&\Gr(n,s).&
}
\end{equation}
Again, we interpret each    homogeneous component of the modules appearing in  \eqref{eqSUdual}   as a vector bundle over $\Gr(n,s)$, and retain the name \virg{bundle} for the whole   modules. In particular, the sub--bundle $\mu_{\SSigma}$ determines   a natural filtration
\begin{equation}\nonumber
S^\bullet T^\ast\supseteq \mu_{\SSigma}\supseteq \mu_{\SSigma}^2\supseteq \cdots, 
\end{equation}
to which it corresponds a tower of vector bundles
\begin{equation}\nonumber
\xymatrix{
\frac{S^\bullet T^\ast}{\mu_{\SSigma}} \ar[dr] & \ar[l]\frac{S^\bullet T^\ast}{\mu^2_{\SSigma}}  \ar[d]& \ar[l]\cdots\ar[dl]\\
&\Gr(n,s).&
}
\end{equation}
Symbol   $J^k(E,n)$ will denote the space of $k$--jets of $n$--dimensional submanifolds of $E$, being  $E$      an arbitrary    $(n+m)$--dimensional manifold. 
In this paper we prove the following result (see also Figure \ref{Fig}). 
\begin{theorem}\label{ThEgregio}
The space $\I_s(\C)$ of  $s$--dimensional horizontal integral elements of the Cartan plane $\C$ on $J^{k-1}(E,n)$ at a    point $\theta\in J^{k-1}(E,n)$ is an affine bundle over $\Gr(n,s)$, modeled by the   $k\Th$ homogeneous component of $ \frac{S^\bullet T^\ast}{\mu^k_{\SSigma}}\otimes_\R N$, with $N\cong\R^m$.
\end{theorem}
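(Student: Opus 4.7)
My plan is to analyze $\I_s(\C)$ fiber-wise over $\Gr(n,s)$ and then globalize. I would start by recalling that the Cartan plane at $\theta\in J^{k-1}(E,n)$ fits into a canonical short exact sequence
\[
0\longrightarrow S^{k-1}L^\ast\otimes_\R N\longrightarrow \C_\theta \longrightarrow L\longrightarrow 0,
\]
with $L\cong\R^n$ the tangent space to the base of the underlying submanifold and $N\cong\R^m$ a transverse complement; the kernel is the tangent at $\theta$ to the affine fiber of $J^{k-1}(E,n)\to J^{k-2}(E,n)$. An $s$-dimensional horizontal subspace $V\subseteq\C_\theta$ projects isomorphically onto some $\Sigma\in\Gr(n,s)$, defining the projection $\pi\colon\I_s(\C)\to\Gr(n,s)$. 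For fixed $\Sigma$, such a $V$ is the image of a splitting $\phi\colon\Sigma\to\C_\theta$ of the above sequence, and the set of all such splittings (ignoring integrability) is naturally a torsor over $\Sigma^\ast\otimes S^{k-1}L^\ast\otimes N$.

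Next I would compute the integrability condition. In coordinates adapted to $\theta$ with $\Sigma=\Span{e_1,\ldots,e_s}$, the splitting $\phi$ is encoded by constants $p^j_{a,\beta}$ (with $a\leq s$, $|\beta|=k-1$, $j\leq m$), and evaluating $d\omega|_V=0$ on each contact $1$-form $\omega$ of $J^{k-1}(E,n)$ yields the symmetry $p^j_{a,\beta+1_b}=p^j_{b,\beta+1_a}$ for $a,b\in\{1,\ldots,s\}$ and $|\beta|=k-2$, or intrinsically $\iota_v\phi(w)=\iota_w\phi(v)$ for all $v,w\in\Sigma$.

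The decisive step is then to identify the symmetric splittings with the required quotient. I would consider the polarization map
\[
\Pi\colon S^kL^\ast\otimes N\longrightarrow \Sigma^\ast\otimes S^{k-1}L^\ast\otimes N,\qquad \Phi\longmapsto\bigl(v\mapsto\iota_v\Phi\bigr).
\]
Its image manifestly lies in the symmetric subspace, and conversely, by polynomial integration along $\Sigma$ using the symmetry hypothesis, every symmetric splitting arises as $\iota_{(-)}\Phi$ for some $\Phi\in S^kL^\ast\otimes N$; the kernel of $\Pi$ consists of those $\Phi$ annihilated by contraction with every $v\in\Sigma$, namely the polynomials supported only in the $\Sigma^\perp$-directions, which is exactly the degree-$k$ piece of $\mu^k_{\SSigma}$ at $\Sigma$. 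Hence symmetric splittings are canonically parametrized by $\bigl[S^\bullet L^\ast/\mu^k_{\Sigma}\bigr]_k\otimes N$, so $\pi^{-1}(\Sigma)$ inherits an affine structure modeled on exactly this space.

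Since all the constructions above are natural in $\Sigma$, passing from the linear space $L$ to the tautological situation $\SSigma\subseteq T$ glues the fiberwise picture into the claimed affine bundle structure over $\Gr(n,s)$ modeled on the $k\Th$ homogeneous component of $\frac{S^\bullet T^\ast}{\mu^k_{\SSigma}}\otimes_\R N$. The main obstacle I foresee is the algebraic step: checking that $\Pi$ surjects precisely onto the symmetric subspace and has kernel equal to $S^k\Sigma^\perp\otimes N$, as this is what ties the differential-geometric integrability condition of Paragraph~2 to the ideal-power filtration featured in the statement.
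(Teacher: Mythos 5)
Your argument is correct and lands on exactly the same fiberwise identification as the paper's proof: the fiber of $\I_s(\C)\to\Gr(n,s)$ over $\Sigma$ is $S^kL^\ast\otimes N$ modulo the degree--$k$ homogeneous part of $\mu^k_{\SSigma}$ at $\Sigma$, namely $S^k\left(\frac{L}{\Sigma}\right)^\ast\otimes N$. The route differs in packaging. The paper first trivializes the isotropic flag manifold $\I_{s,n}(\C)\cong\Gr(n,s)\times S^kL^\ast$ and then lets $S^kL^\ast$ act by $\Sigma_0^{(p)}\mapsto\Sigma_0^{(p+q)}$, computing the stabilizer as the polynomials whose polarization vanishes on $\Sigma_0$; your polarization map $\Pi$ is precisely the orbit map of that action at the base point, and your kernel computation is the paper's stabilizer computation in different clothing. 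The one place where you genuinely add content is the surjectivity of $\Pi$ onto the symmetric splittings: this is equivalent to saying that every horizontal isotropic $s$--plane extends to a maximal integral element, i.e.\ that the descended action is transitive --- a fact the paper simply asserts in the general case (it is argued explicitly only for $k=2$ in the motivating example, via containment in a Lagrangian plane). Your Poincar\'e--lemma--style integration along $\Sigma$, splitting by bidegree and applying Euler's identity to the closed $\Sigma^\ast$--valued form $v\mapsto\iota_v\phi$, is exactly the right way to fill that step, so your version is, if anything, the more complete one; the only loose end is bookkeeping of the normalization constants relating $\iota_v$ to partial differentiation, which does not affect the conclusion.
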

The main motivation for Theorem \ref{ThEgregio}  comes from the theory of geometric singularities of solutions of nonlinear PDEs (see \cite{LucaChar} and references therein). Roughly speaking, the space $\I_n(\C)$ corresponds to the $k$--jets of regular solutions whose $k-1\St$ jet is $\theta$, where by \emph{regular} we mean projecting without degenerations to the lower--order jet spaces. On the other hand,  $\I_s(\C)$, with $s<n$, corresponds to the so--called \emph{singular} solutions (of \emph{type} $n-s$), i.e., projecting with degeneration.\footnote{Recall that \emph{degeneracy} here refers to the map, not the   solution itself, which is   a genuine smooth submanifold of $J^{k-1}(E,n)$}  In this perspective, Theorem \ref{ThEgregio} is a first step towards the description, in terms of Grassmannian manifolds and their natural structures,  of the geometry of $\I_s(\C)$, which is a somewhat less known object as compared to $\I_n(\C)$, i.e., the well--known fiber   $J^k(E,n)_\theta$. It is worth mentioning that  a parallel  investigation   has been  carried out  recently     by one of us (GM) in the context of jet spaces of infinite order \cite{MorenoCauchy}.\par
We stress that, except for the concluding   Section \ref{exa}, the point $\theta$ is fixed once and for all, so that the topology of $E$ plays no role whatsoever in our analysis. Moreover, since, as it will turn out, the space   $\I_s(\C)$ possesses    an affine bundle structure, it is not restrictive to fix also an $n$--dimensional horizontal integral element $L\leq \C$, which, in its turn, can be identified with $\R^n$. Similarly, if $\theta=[L_0]_y^{k-1}$, then      the \emph{normal space} $N$  at $\theta$, which is defined as $\frac{T_y E}{ T_y L_0}$, is completely determined by $\theta$ and as such it will be identified with $\R^m$. The only \virg{variable objects} we shall deal with are the integral elements of $\C$ and their \virg{shadows} on $L$, which are elements  of $\Gr(n,s)$.
\subsection{Notations and conventions}
By the symbol $L$ (resp., $N$) we shall always mean $\R^n$ (resp., $\R^m$), being $n$ and $m$ fixed values throughout the paper; accordingly, $\R[x^1,\ldots,x^n]$ can be written as $S^\bullet L^\ast$. 
 Concerning jet spaces and related notions (coordinates, total derivatives, etc.), our notations and conventions comply with the one used in the book \cite{MR1670044} or in the review paper \cite{MR2813504}. We managed not to use a  proprietary notation, except for  the Grassmannian   $\check{\Gr}(\C,s)$ of \emph{horizontal} $s$--dimensional subspaces of $\C$, reading $\check{\P}\C$ when $s=1$.
\begin{figure}
\caption{Maximal, i.e., $n$--dimensional, horizontal integral subspaces of $\C$ can be labeled by polynomials: $L_p$, $L_q$, etc. In turn, the entire family of $s$--dimensional subspaces of a fixed $L_p$, which coincides with $\Gr(n,s)$, is made of (non--maximal, horizontal)   integral elements. So, a pair $(\Sigma_0, p)$  made of a subspace of $L$ and a polynomial on $L$, can be used to identify a generic $s$--dimensional horizontal integral element of $\C$. Nevertheless, to make this identification one--to--one, some polynomials, namely those which lift $\Sigma_0$ to the same subspace of $\C$, need to be factored out, and the universal sequence over $\Gr(n,s)$ provides precisely these polynomials.
\label{Fig}}
 \epsfig{file=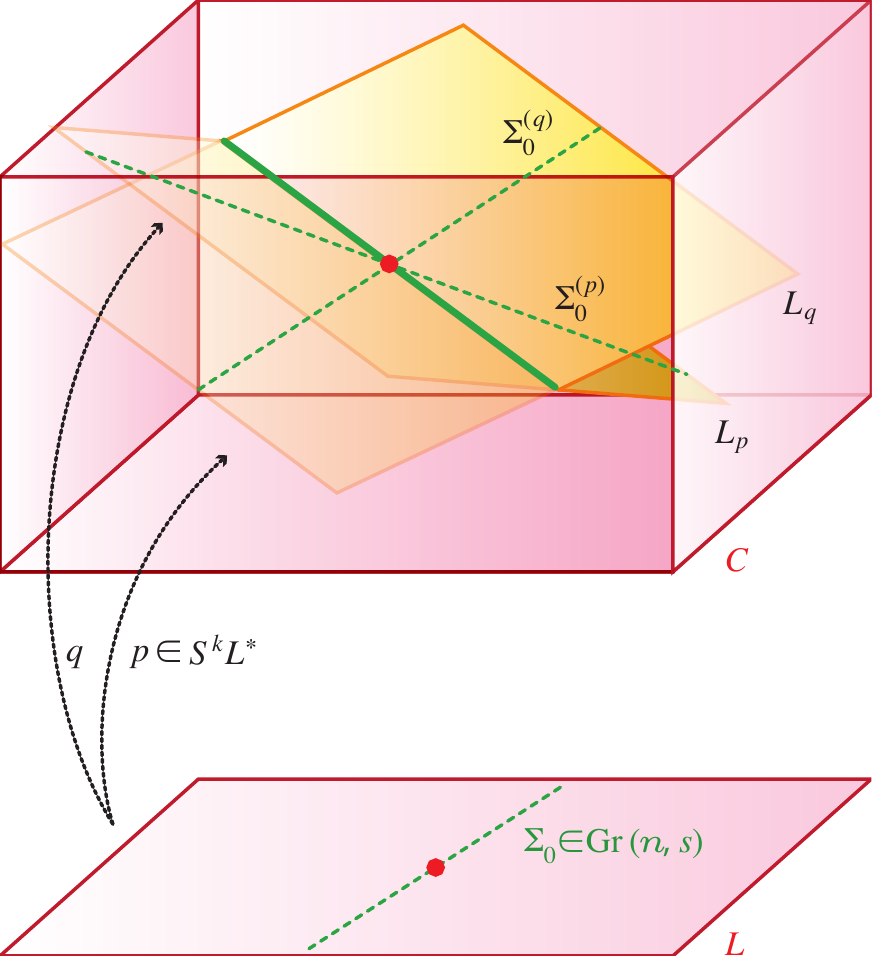}
\end{figure}
\subsection{Structure of the paper}
This paper is pivoted around Theorem \ref{ThEgregio}, proved in the central Section \ref{secPrTh}. In spite of the  plainness of its arguments, the general proof of Theorem \ref{ThEgregio} may look slightly  abstract, so that we deemed it convenient to dedicate the opening Section \ref{secMotExample} to a toy model, whose proof can be carried out in few elementary steps. In the rest of the paper we basically go over the same ideas and results sketched in Section \ref{secMotExample}, but  in a broader context and with more details. To this end, two gadgets need to be introduced: the so--called meta--symplectic structure on the Cartan plane (Section \ref{meta}), and a suitable incidence relation of isotropic elements, with respect to such a structure  (Section \ref{IsoFlag}).  In the  concluding   Section \ref{exa}  we obtain two easy but apparently original results which benefits from Theorem \ref{ThEgregio}, and speculate on its   envisaged range of applications.
\section{A motivating example}\label{secMotExample}
The first nontrivial example to test Theorem \ref{ThEgregio}  corresponds to the choice $n=3$. Let $\C=L\oplus L^\ast$, with $L=\Span{x_1,x_2,x_3}$ and $L^\ast=\Span{\xi^1,\xi^2,\xi^3}$, where $\langle x_i,\xi^j \rangle=\delta_i^j$, and denote by    
\begin{equation}\label{eqGraficoPoly}
\graph{p}:=\{l+p(l)\mid l\in L\}
\end{equation}
the graph of  $p=p_{ij}\xi^i\otimes \xi^j\in L^\ast\otimes L^\ast$. 
Call \emph{horizontal} a subspace $\Sigma\leq\C$ whose projection on $L$ is non degenerate, and define
\begin{equation}\label{eqDefGrHor}
\check{\Gr}(\C,s):=\{ \Sigma\in \Gr(\C,s)\mid \Sigma\textrm{ is horizontal}\}.
\end{equation}
Obviously, $\check{\Gr}(\C,s)$ is nonempty only for $s=0,1,2,3$. Observe that
\begin{equation}\label{eqGrH3}
\check{\Gr}(\C,3)=L^\ast\otimes_\R L^\ast
\end{equation}
since any horizontal 2D subspace of $\C$ is the graph of a homomorphism from $L$ to $L^\ast$. 
\subsection{The \virg{horizontal} Lagrangian Grassmannian of a 6D symplectic space}
Interestingly enough, the subspace $S^2L^\ast$ of symmetric forms on $L$ corresponds precisely to the homomorphisms whose graph is Lagrangian  with respect to the canonical symplectic structure
$
\omega=\xi^i\wedge x_i 
$, 
owing to the well--known fact that
\begin{equation}
\omega|_{\graph{p}}\equiv 0 \Leftrightarrow p\in S^2L^\ast.\label{eqGraficoIsotropo}
\end{equation}
In other words, we have proved that 
\begin{equation}\label{eqI3}
\I_3(\C)\cong S^2L^\ast, 
\end{equation}
i.e.,  the claim of Theorem \ref{ThEgregio} for $s=n=3$ and $m=1$. In such a case, $\Gr(3,3)$ reduces to a point, $\mu_{\SSigma}$ is the zero ideal, $N$ is a line and hence the $2\Nd$ homogeneous component of $ \frac{S^\bullet T^\ast}{\mu^2_{\SSigma}}\otimes_\R N$ is nothing but  $S^2L^\ast$. The so--obtained result just reformulates the fact that the generic fiber of $J^2(3,1)$ over $J^1(3,1)$ is the open and dense subset of the Lagrangian Grassmannian of the contact distribution, made of horizontal elements. 
\subsection{2D isotropic subspaces of a 6D symplectic space}\label{subEx2D6D}
We examine now   the case $s=2$, and try   to generalize formula \eqref{eqGrH3}. The key difference is that, in the case $s=3$, \emph{any} $\Sigma\in \check{\Gr}(\C,3)$ projects over the \emph{same} subspace $L$, whereas the projection $\underline{\Sigma}$ of a generic element $\Sigma\in \check{\Gr}(\C,2)$ ranges into the family of all 2D subspaces of $L$, i.e., it belongs to the Grassmannian $\Gr(3,2)$. In other words, there is a natural fibration
\begin{equation}\nonumber
\xymatrix{
\check{\Gr}(\C,2)\ar[d]^{}\\
\Gr(3,2),
}
\end{equation}
with
\begin{equation}\label{eqFibraGRC2}
\check{\Gr}(\C,2)_{\underline{\Sigma}}={\underline{\Sigma}}^\ast\otimes_\R L^\ast.
\end{equation}
Observe that formula \eqref{eqFibraGRC2} is \emph{not}, strictly speaking, the analog of \eqref{eqGrH3}, since it accounts for not the whole space $\check{\Gr}(\C,2)$, but just a fiber of it.
The only way to make it \virg{global}, is to use the universal sequence \eqref{eqSU}: indeed, the spaces $\Sigma$ and $L$  appearing in  \eqref{eqFibraGRC2} can be thought  of as the fibers of $\SSigma$ and $T$, respectively, over the same point $\Sigma$, so that formula 
\begin{equation}\label{eqFibraGRC2glob}
\check{\Gr}(\C,2) =\SSigma^\ast\otimes T^\ast
\end{equation}
actually   describes the space $\check{\Gr}(\C,2)$ as a tensor combination of natural vector bundles over $\Gr(3,2)$. It remains to describe the sub--bundle $\I_2(\C)$ made of isotropic 2D subspaces: since any such subspace is contained into a 3D (i.e., Lagrangian) subspace, in view of \eqref{eqGraficoPoly}, we can claim that for any $\Sigma\in \I_2(\C)$ there exists a homomprhism $p_{\underline{\Sigma}}\in S^2 L^\ast $ such that 
\begin{equation}\label{eqSigmaGEnerico}
\Sigma=\{\sigma+p_{\underline{\Sigma}}(\sigma)\mid \sigma\in\underline{\Sigma}\},
\end{equation}
i.e., $\Sigma$ is the graph of the restriction of $p_{\underline{\Sigma}}$ to $\underline{\Sigma}\leq L$. In other words, the space $S^2 L^\ast$ acts on the fiber $\I_2(\C)_{\underline{\Sigma}}$. The stabilizer of $\underline{\Sigma}$ is made by those $p\in S^2 L^\ast$ such that the subspace $\Sigma$  given in \eqref{eqSigmaGEnerico} coincides with $\underline{\Sigma}$, i.e., such that  $p|_{\Sigma}\equiv 0$. Being $p$ symmetric, the last condition is equivalent to the fact that $p\in \mu_\Sigma^2$, and hence it is proved that $\I_2(\C)_{\underline{\Sigma}}$ is modeled over the $2\Nd$ homogeneous component of the fiber of $ \frac{S^\bullet T^\ast}{\mu^2_{\SSigma}} $ over the   point $\underline{\Sigma}$.
 \subsection{An example of polar distribution}\label{subExPolar}
As an immediate consequence of \eqref{eqI3}, 
\begin{equation}\label{eqIsoFundS2}
T_L\I_3(\C)\cong S^2L^\ast.
\end{equation}
In the case of $\I_2(\C)$ there is no analog of isomorphism \eqref{eqIsoFundS2}, but just an inclusion:
\begin{equation}\label{eqTangenteI2}
T_\Sigma\I_2(\C)\leq T \check{\Gr}(\C,2) =\Sigma^\ast\otimes_\R \frac{\C}{\Sigma}.
\end{equation}
On the other hand,   $\Sigma$ is isotropic, and as such it is contained into the kernel of the map 
$
\C \stackrel{\omega}{\longrightarrow}\C^\ast\longrightarrow \Sigma^\ast
$, 
i.e., there is a homomorphism
$
\frac{\C}{\Sigma}\to \Sigma^\ast
$ 
which, combined with \eqref{eqTangenteI2}, yields
\begin{eqnarray}
T_\Sigma\I_2(\C) &\longrightarrow &\Sigma^\ast\otimes_\R\Sigma^\ast,\nonumber\\
p&\longmapsto & p^\sharp:= \omega(\,\cdot\, , p(\,\cdot\,)).\label{eqDefSharp}
\end{eqnarray}
It can be easily shown that $p^\sharp$ is symmetric, e.g., by a direct coordinate approach.  Indeed, $\graph{p}$ is isotropic if and only if there exists a $\widetilde{p}\in S^2L^\ast$ such that
\begin{equation}\nonumber
\graph{p}=\{\sigma+\widetilde{p}(\sigma)\mid \sigma\in  {\Sigma}\},
\end{equation}
i.e., $p=\widetilde{p}|_\Sigma$ modulo the natural projection  of $ \frac{\C}{\Sigma}$ onto $\frac{\C}{L}$:
\begin{equation}\label{diagrammaincasinato}
\widetilde{p}\in L^\ast\otimes L^\ast \longrightarrow \Sigma^\ast\otimes L^\ast =\Sigma^\ast\otimes \frac{\C}{L} \longleftarrow  \Sigma^\ast\otimes \frac{\C}{\Sigma}\ni p.
\end{equation}
Let $\Sigma=\Span{x_1,x_2}$ and $\frac{\C}{\Sigma}=\Span{x_3,L^\ast}$, so that 
\begin{equation}\label{eqTangVectCoord}
p=p_{\alpha j}\xi^\alpha\otimes \xi^j+p_\alpha \xi^\alpha\otimes x_3.
\end{equation}
Now, to apply the rightmost arrow of \eqref{diagrammaincasinato} to $p$ is the same as to set $p_\alpha=0$, and to impose that $p$ is the restriction of a symmetric $\widetilde{p}$ means to require that   that $p_{\alpha j}$ are symmetric for $j=1,2$.
In turn, this means   that 
$
p^\sharp=p_{\alpha \beta}\xi^\alpha\otimes \xi^\beta
$ 
is symmetric. We have just shown that the analog of isomorphism \eqref{eqIsoFundS2} for non--maximal isotropic elements is the epimorphism 
$
\sharp:T_\Sigma\I_2(\C)  \longrightarrow S^2\Sigma^\ast 
$. 
The \emph{polar plane}   $P_\Sigma\leq T_\Sigma\I_2(\C)$ is defined as 
\begin{equation}\label{defPolar}
P_\Sigma:=\ker\sharp,
\end{equation}
and $\Sigma\longmapsto P_\Sigma$ defines a distribution on $\I_2(\C)$ called \emph{polar}, firstly studied, to the authors best knowledge,\footnote{The name \emph{polar} is used here for the first time. The authors cannot be sure though that the same notion is not already present, in other guises, in the literature.} by one of us (MB) \cite{ThesisMichi}. The existence of this distribution was pointed out by A.M. Vinogradov \cite{PolarPrivate} during the  Ph.D thesis of the first author.\par
Imposing \eqref{defPolar} on the coordinate expression  \eqref{eqTangVectCoord} of $p$ we get
\begin{equation}\label{eqTangVectCoordPolar}
p=\xi^\alpha\otimes(p_{\alpha 3}  \xi^3+p_\alpha   x_3),
\end{equation}
i.e., $p$ takes its values in the subspace $\Span{x_3,\xi^3}$ of $\frac{\C}{\Sigma}$. The pre--image of $\im p$ under the natural projection $\C\to \frac{\C}{\Sigma}$ is a subspace of $\C$ canonically associated with $p$, known as its \emph{osculator}, and denoted by $\osc p$ \cite{MO98658}. So, \eqref{defPolar} is equivalent to the fact that $\osc p$  is contained in $L\oplus\Span{\xi^3}$, which is precisely $L\oplus\left( \frac{L}{\Sigma}\right)^\ast$, i.e., the $\omega$--orthogonal complement  $\Sigma^\perp$ of $\Sigma$ in $\C$.\par
Now we are able to reveal the geometric content of $P_\Sigma$. Let $\gamma_p(t)$ be an integral curve of $p\in T_\Sigma\I_2(\C) $, such that $\gamma_p(0)=\Sigma$. Then  $\{\gamma_p(t)\}_{-\epsilon<t<\epsilon}$ is a family of planes containing $\Sigma$, and we denote by $O_{\gamma_p}$ its linear envelope, i.e., the smallest linear subspace containing it: $\osc p$ is precisely the intersection of all the $O_{\gamma_p}$'s. Hence, $p\in P_\Sigma$ if and only if the deformation of $\Sigma$ determined by $p$ is, to first order, contained into $\Sigma^\perp$. 
\subsection{Concluding remarks}
In this preliminary section we proved main Theorem \ref{ThEgregio}, in the simplest case in which its statement does  not become trivial. We also pointed out the existence of a canonical distribution on $\I_s(\C)$, which possesses a transparent geometric interpretation.  A fuller structural account of this distribution will be the object of  a forthcoming study \cite{MichiForthcoming}. The remainder of this paper is dedicated to the proof of Theorem \ref{ThEgregio} and to some examples of the polar distribution. We quickly review the basic tools and notions needed for this.
\section{The meta--symplectic structure on $\C$}\label{meta}
We generalize the setting of Section \ref{secMotExample}   as follows. Recall that $\C$ was given as the direct sum $L \oplus L^\ast$: now, instead of $L^\ast$, i.e., linear functions on $L$, we take \emph{all} $N$--valued polynomials on $L$, of a certain degree $k-1$, i.e., the space $S^{k-1}L^\ast\otimes_\R N$. In other words, we set
\begin{equation}\label{eqDefC}
\C:=L\oplus (S^{k-1}L^\ast\otimes_\R N).
\end{equation}
Definition \eqref{eqDefGrHor} can be reproduced  verbatim in this broader context, in which \eqref{eqGrH3} becomes
\begin{equation}\label{eqGrH3gen}
\check{\Gr}(\C,n)=L^\ast\otimes_\R S^{k-1}L^\ast\otimes_\R N.
\end{equation}
The meta--symplectic form on $\C$ serves exactly the same purpose as the symplectic form in Section \ref{secMotExample} , i.e., it allows to single out the elements of $\check{\Gr}(\C,n)$ which correspond to the symmetric polynomials of degree $k$ on $L$. 
\begin{remark}
The \emph{polarization} $\hat{p}$ of a  polynomial $p$ of degree $k$  is $\frac{1}{k}$ times its differential $dp$, which can be understood as a linear map from $L$ to $S^{k-1}L^\ast$:
\begin{eqnarray}
L & \stackrel{\hat{p}}{\longrightarrow} & S^{k-1}L^\ast\otimes_\R N,\nonumber\\
l &\longmapsto & \hat{p}(l):=p(l,\underset{k-1\textrm{ symmetric entries}}{\underbrace{\, \cdot\,  \cdot\,  \cdot\,  \cdot\,   \cdot\,  \cdot\, \cdot\, \cdot\, \cdot\,}}).
\end{eqnarray}
\end{remark}
In other words, symmetric polynomials correspond to  {exact forms}, which in turn coincide with the closed ones. So, the sought--for meta--symplectic form $\Omega$ must tell the latter ones, much as the canonical symplectic form on the cotangent manifold allows to distinguish the graphs of closed forms.  Its definition is straightforward.\par
Just set
\begin{equation}\label{eqDefOmega}
\Omega^\prime(l,p):=\hat{p}(l)\in S^{k-2}L^\ast\otimes_\R N
\end{equation}
for any   $l\in L$ and $p\in S^{k-1}L^\ast\otimes N$.
\begin{lemma}
 There exists a unique $(S^{k-2}L^\ast\otimes N)$--valued 2--form $\Omega$ on $\C$ which extends the $\Omega^\prime$ defined by \eqref{eqDefOmega}, such that both $L$ and $S^{k-1}L^\ast\otimes_\R N$ are $\Omega$--isotropic.
\end{lemma}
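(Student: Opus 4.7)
The strategy is to exploit the direct sum decomposition $\C = L \oplus W$, where $W := S^{k-1}L^\ast\otimes_\R N$, and to reduce the construction of $\Omega$ to a simple book-keeping problem over these two summands. Any alternating bilinear form on $L\oplus W$ with values in $S^{k-2}L^\ast\otimes N$ is determined by three pieces: its restriction to $\wedge^2 L$, its restriction to $\wedge^2 W$, and its cross term $L\otimes W\to S^{k-2}L^\ast\otimes N$. The two isotropy conditions force the first two pieces to vanish, while the extension condition pins down the cross term to be exactly $\Omega'$; alternation then propagates this uniquely to all of $\C\times\C$.

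For existence, I would write down the explicit candidate
\begin{equation*}
\Omega\bigl((l_1,p_1),(l_2,p_2)\bigr):=\hat{p}_2(l_1)-\hat{p}_1(l_2),
\end{equation*}
and verify the three requirements by direct inspection: alternation is manifest from the skew-symmetric form of the expression; vanishing on $L\times L$ and on $W\times W$ is immediate from the definition; and setting $(l_1,p_1)=(l,0)$, $(l_2,p_2)=(0,p)$ recovers $\hat{p}(l)=\Omega'(l,p)$.

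For uniqueness, suppose $\widetilde\Omega$ is any 2-form meeting the three conditions. The $L$-isotropy kills $\widetilde\Omega|_{L\times L}$, the $W$-isotropy kills $\widetilde\Omega|_{W\times W}$, the extension condition forces $\widetilde\Omega|_{L\times W}=\Omega'$, and alternation then fixes $\widetilde\Omega|_{W\times L}=-\Omega'$; by bilinearity these blocks determine $\widetilde\Omega$ uniquely, so $\widetilde\Omega=\Omega$. I do not foresee any real obstacle: the lemma is pure linear algebra, and the only care needed concerns the sign conventions in the alternation, which must agree with the specification of $\Omega'$ in \eqref{eqDefOmega} and be consistent with the canonical symplectic form $\omega=\xi^i\wedge x_i$ recovered in the case $k=2$ of Section \ref{secMotExample}.
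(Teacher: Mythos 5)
Your proposal is correct and follows essentially the same route as the paper, which decomposes $\Lambda^2(\C)$ as $\Lambda^2(L)\oplus\bigl(L^\ast\otimes_\R(S^{k-1}L^\ast\otimes N)^\ast\bigr)\oplus\Lambda^2(S^{k-1}L^\ast\otimes_\R N)$ and sets $\Omega:=0+\Omega'+0$. Your version merely spells out the block-by-block verification (and the explicit skew-symmetrized formula) that the paper leaves implicit.
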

\begin{proof}
 From \eqref{eqDefC} it follows that
 \begin{equation}
\Lambda^2(\C)=\Lambda^2(L)\oplus\left( L^\ast \otimes_\R\left(S^{k-1}L^\ast\otimes N \right)^\ast  \right)\oplus \Lambda^2(S^{k-1}L^\ast\otimes_\R N).
\end{equation}
Then    $\Omega:=0+\Omega^\prime+0$ is the desired form.
\end{proof}
\begin{definition}
 $\Omega$ is the \emph{meta--symplectic form} on $\C$.
\end{definition}
\begin{example}[Coordinates] If $L=\Span{x_1,\ldots,x_n}$ and $N=\Span{y_1,\ldots, y_m}$, then 
\begin{equation}\label{eqDefSkmuN}
S^{k-1}L^\ast\otimes N = \Span{ p^j_\sigma \xi^\sigma\otimes y_j\mid |\sigma|\leq k-1},
\end{equation}
where $\sigma=(\sigma_1,\ldots,\sigma_n)$ is a multi--index  and $\xi^\sigma$ is a short for $\xi^{\sigma_1}\xi^{\sigma_2}\cdots\xi^{\sigma_n}$.  Coefficients $p_\sigma^j$ appearing in \eqref{eqDefSkmuN}  can be taken as the basis elements  of the dual space
 \begin{equation}\label{eqDefSkmuNdual}
\left(S^{k-1}L^\ast\otimes N\right)^\ast = \Span{ p^j_\sigma  \mid |\sigma|\leq k-1}.
\end{equation}
 In these coordinates,
\begin{equation}\nonumber
\Omega=(\xi_i\wedge p_\sigma^j)\otimes (\xi^{\sigma-1_i}\otimes y_j).
\end{equation}
Here $\sigma-1_i:=(\sigma_1,\ldots,\sigma_{i-1},\sigma_i-1,\sigma_{i+1},\ldots,\sigma_n)$. 
\end{example}
\begin{lemma}
 There exists a unique $(S^{k-2}L^\ast\otimes N)$--valued 2--form on $\C$ satisfying \eqref{eqDefOmega}, and it is precisely the curvature form of the Cartan distribution of $J^{k-1}(E,n)$ at $\theta$.
\end{lemma}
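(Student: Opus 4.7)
The plan is to identify the curvature form of the Cartan distribution at $\theta$ with the tensor $\Omega$ already constructed, and then invoke the uniqueness statement of the preceding lemma. First, I would fix standard jet coordinates $(x^i,y^j,p^j_\sigma)_{|\sigma|\le k-1}$ around $\theta$, chosen so that $\theta$ sits at the origin, the horizontal integral element $L\leq\C_\theta$ is spanned by the total derivatives $D_i|_\theta=\partial_{x^i}+\sum_{|\sigma|\le k-2}p^j_{\sigma+1_i}\partial_{p^j_\sigma}$, and the vertical summand $S^{k-1}L^\ast\otimes_\R N$ of $\C_\theta$ is identified with $\Span{\partial_{p^j_\tau}|_\theta : |\tau|=k-1}$ via $\partial_{p^j_\tau}\leftrightarrow\xi^\tau\otimes y_j$.

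Next I would pin down the quotient $T_\theta J^{k-1}(E,n)/\C_\theta$. The non--Cartan directions at $\theta$ are the $\partial_{p^j_\rho}$ with $|\rho|\le k-2$; those with $|\rho|<k-2$ are already absorbed by the total derivatives, so the classes of the $\partial_{p^j_\rho}$ with $|\rho|=k-2$ provide a basis of the quotient. The assignment $[\partial_{p^j_\rho}]\mapsto\xi^\rho\otimes y_j$ is manifestly chart--independent, yielding the desired canonical isomorphism with $S^{k-2}L^\ast\otimes_\R N$.

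The core step is to compute $\kappa(X,Y):=[\widetilde X,\widetilde Y]_\theta\bmod \C_\theta$ on basis vectors of $\C_\theta$. Since $[D_i,D_j]=0$ identically and the purely vertical fields $\partial_{p^j_\tau}$ commute pairwise, both $L$ and $S^{k-1}L^\ast\otimes_\R N$ are $\kappa$--isotropic. For a mixed pair a direct computation yields $[D_i,\partial_{p^j_\tau}]=-\partial_{p^j_{\tau-1_i}}$ whenever $\tau_i\ge 1$, whose class in the quotient reads $\xi^{\tau-1_i}\otimes y_j$. Matching this against the polarization formula shows that $\kappa(x_i,\xi^\tau\otimes y_j)$ coincides with $\widehat{\xi^\tau\otimes y_j}(x_i)$, up to the combinatorial factor encoded in the $\tfrac{1}{k}$ normalization of $\hat p$; hence $\kappa|_{L\times(S^{k-1}L^\ast\otimes_\R N)}=\Omega'$, and by the uniqueness clause of the preceding lemma $\kappa=\Omega$.

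The main obstacle will be the bookkeeping of combinatorial coefficients that appear when translating the bracket $[D_i,\partial_{p^j_\tau}]=-\partial_{p^j_{\tau-1_i}}$ into the polarization $\widehat{\xi^\tau\otimes y_j}(x_i)$, and confirming that the identification of $T_\theta J^{k-1}(E,n)/\C_\theta$ with $S^{k-2}L^\ast\otimes_\R N$ used tacitly in \eqref{eqDefOmega} matches the one produced naturally by the ambient jet geometry. Once these conventions are fixed and shown to be compatible, the remainder of the argument reduces to a mechanical check that $\kappa$ fulfils the three defining properties of $\Omega$.
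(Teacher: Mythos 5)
The paper actually states this lemma without proof, so there is no argument of the authors' to compare yours against; your proposal supplies the standard verification one would expect, and its overall strategy --- compute the brackets of the standard frame $\{D_i,\partial_{p^j_\tau}\}$ of $\C_\theta$, observe that $L$ and $S^{k-1}L^\ast\otimes_\R N$ are $\kappa$--isotropic, match the mixed brackets with $\Omega^\prime$, and conclude by the uniqueness clause of the preceding lemma --- is the right one. The bracket computations themselves ($[D_i,D_l]=0$, $[\partial_{p^j_\tau},\partial_{p^{j'}_{\tau'}}]=0$, $[D_i,\partial_{p^j_\tau}]=-\partial_{p^j_{\tau-1_i}}$) are correct.

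There are, however, two genuine defects. First, your description of the quotient $T_\theta J^{k-1}(E,n)/\C_\theta$ is wrong for $k\geq 3$: killing the total derivatives only eliminates the $n$ classes $[\partial_{x^i}]$, so the classes $[\partial_{p^j_\rho}]$ with $|\rho|<k-2$ are \emph{not} absorbed, and the full quotient has dimension $m\sum_{i=0}^{k-2}\binom{n+i-1}{i}$, strictly larger than $\dim\left(S^{k-2}L^\ast\otimes_\R N\right)$. What saves the argument is not that the quotient equals $S^{k-2}L^\ast\otimes_\R N$, but that your bracket formulas show the \emph{image} of $\kappa$ lies in the subspace spanned by the $[\partial_{p^j_\rho}]$ with $|\rho|=k-2$; you should restate the identification as an isomorphism of that canonical subspace (the image, under the projection $T_\theta J^{k-1}\to T_{\theta'}J^{k-2}$, of the vertical subspace of $J^{k-2}\to J^{k-3}$) with $S^{k-2}L^\ast\otimes_\R N$, rather than of the whole quotient. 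Second, the step you defer as ``bookkeeping'' is precisely the content of the lemma and cannot be left unresolved: with the naive identification $\partial_{p^j_\tau}\leftrightarrow\xi^\tau\otimes y_j$ one gets $\kappa(x_i,\xi^\tau\otimes y_j)=-\xi^{\tau-1_i}\otimes y_j$, whereas the polarization gives $\hat{p}(x_i)=\tfrac{\tau_i}{k-1}\,\xi^{\tau-1_i}\otimes y_j$ for $p=\xi^\tau\otimes y_j$; these differ by a $\tau$--dependent factor, so the equality $\kappa|_{L\times(S^{k-1}L^\ast\otimes_\R N)}=\Omega^\prime$ only holds after choosing the divided--power normalization $p^j_\sigma\leftrightarrow \xi^\sigma/\sigma!$ (and a sign) in the identifications of both the fibre of $\C_\theta$ and the value space. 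Until that convention is fixed and checked, the claimed coincidence with $\Omega^\prime$, and hence the appeal to the uniqueness of the extension, is not established.
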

\begin{example}
 When $k=2$ and $\dim N=1$, $\Omega$ is the symplectic form on the contact distribution of $J^1(n,1)$.
\end{example}
\section{Isotropic Grassmannians and flag manifolds}\label{IsoFlag}
We give now the central definition.
\begin{definition}
The subset
\begin{equation}\label{eqDefIl}
 \I_s(\C):=\{\Sigma\leq \C\mid \dim\Sigma=s,\ \Omega|_\Sigma\equiv 0,\ \Sigma\textrm{ horizontal} \}
\end{equation}
of the Grassmannian manifold of $s$--dimensional subspaces of $\C$ is called the \emph{isotropic Grassmannian}; the incidence relation
\begin{equation}\label{eqDefIln}
\I_{s,n}(\C):=\{(\Sigma,R)\mid \Sigma\leq R \}\subseteq  \I_s(\C)\times  \I_n(\C)
\end{equation}
is the \emph{isotropic (partial)  flag manifold}.\footnote{See \cite{MorenoCauchy} for more information on isotropic flags.}
\end{definition}
We stress that by \virg{horizontal} in  \eqref{eqDefIl} we mean that $\Sigma$ is transversal with respect to the fibers of $J^{k-1}\longrightarrow J^{k-2}$.
\begin{corollary}
$ \I_n(\C)$ is precisely the subset of   $\check{\Gr}(\C,n)$  (see \eqref{eqGrH3gen}) made of symmetric polynomials.
\end{corollary}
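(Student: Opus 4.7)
The plan is to identify a horizontal $n$--dimensional subspace $R\in\check{\Gr}(\C,n)$ with its ``slope'' and then translate the $\Omega$--isotropy of $R$ into a symmetry condition on that slope, using only the definition \eqref{eqDefOmega} of $\Omega$ and the fact that both summands in \eqref{eqDefC} are $\Omega$--isotropic.

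First, since $R$ is horizontal, it is the graph of a unique linear map $p:L\to S^{k-1}L^\ast\otimes_\R N$, so $p$ can be regarded as an element of $\check{\Gr}(\C,n)=L^\ast\otimes_\R S^{k-1}L^\ast\otimes_\R N$ via \eqref{eqGrH3gen}, i.e., as a multilinear map $L^{\times k}\to N$ which is symmetric in its last $k-1$ arguments. I would then write a typical pair of vectors in $R$ as $l_1+p(l_1)$ and $l_2+p(l_2)$ with $l_1,l_2\in L$, expand $\Omega(l_1+p(l_1),l_2+p(l_2))$ bilinearly, and kill the four pure terms using the isotropy of $L$ and of $S^{k-1}L^\ast\otimes_\R N$. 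What survives is the cross--term, which by \eqref{eqDefOmega} equals $\widehat{p(l_2)}(l_1)-\widehat{p(l_1)}(l_2)\in S^{k-2}L^\ast\otimes_\R N$.

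Next, I would unravel the polarization: applied to an arbitrary test tuple $(v_1,\dots,v_{k-2})\in L^{\times(k-2)}$ the cross--term reads, up to a nonzero constant,
\begin{equation}
p(l_1,l_2,v_1,\dots,v_{k-2})-p(l_2,l_1,v_1,\dots,v_{k-2}).
\end{equation}
Thus $\Omega|_R\equiv 0$ is equivalent to $p$ being symmetric in its first two arguments. Combined with the symmetry already built into the last $k-1$ slots (because the values of $p$ lie in $S^{k-1}L^\ast\otimes_\R N$), transpositions of adjacent indices generate the full symmetric group, so $p$ is totally symmetric in all $k$ entries, that is, $p\in S^kL^\ast\otimes_\R N$. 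Conversely any such totally symmetric $p$ obviously makes the cross--term vanish, so the two conditions are equivalent and $R\in\I_n(\C)$ iff $p$ is a symmetric polynomial.

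The only subtle point is the bookkeeping of symmetries in the polarization identity, i.e., verifying that the exchange of $l_1$ and $l_2$ in $\widehat{p(l_\bullet)}(l_\bullet)$ really corresponds to a transposition of the first two slots of $p$ viewed as a $k$--linear form; this is a direct application of the \emph{Remark} preceding \eqref{eqDefOmega}. No further ingredient is needed, and no genuine difficulty arises beyond this routine check.
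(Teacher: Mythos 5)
Your proposal is correct and follows essentially the same route as the paper, whose proof is simply the instruction to repeat the graph--isotropy computation of \eqref{eqGraficoIsotropo} in the meta--symplectic setting; you have merely spelled out that computation (expansion of $\Omega$ on the graph of $p$, reduction to the cross--term $\widehat{p(l_2)}(l_1)-\widehat{p(l_1)}(l_2)$, and the observation that symmetry in the first two slots together with the built--in symmetry in the last $k-1$ slots yields total symmetry). No discrepancy with the paper's argument.
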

\begin{proof}
 Just go through  the steps \eqref{eqGraficoIsotropo}.
\end{proof}
Next result casts a bridge between the isotropic flag manifold and the universal sequence over $\Gr(n,s)$.
\begin{lemma}\label{lemTrivi}
 $\I_{s,n}(\C)$ can be canonically identified with $S^kT^\ast$.
\end{lemma}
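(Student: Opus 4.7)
The plan is to realize $\I_{s,n}(\C)$ as a (trivial) bundle over $\Gr(n,s)$ by projecting each flag to its $s$-dimensional ``shadow'' in $L$, and then canonically identifying each fibre with $\I_n(\C)$; the preceding corollary will then finish the job.

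First I would define
\[
\pi\colon \I_{s,n}(\C)\longrightarrow \Gr(n,s),\qquad (\Sigma,R)\longmapsto \underline{\Sigma},
\]
where $\underline{\Sigma}$ is the image of $\Sigma$ under the canonical projection $\mathrm{pr}\colon \C = L\oplus(S^{k-1}L^\ast\otimes N)\longrightarrow L$. The horizontality of $R$ makes $\mathrm{pr}|_R\colon R\longrightarrow L$ a linear isomorphism, so $\mathrm{pr}|_\Sigma$ is an injection onto an $s$-dimensional subspace $\underline{\Sigma}\leq L$, and $\pi$ is well defined.

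Next I would exhibit a canonical trivialization of $\pi$. Fix $\underline{\Sigma}\in \Gr(n,s)$: for each $R\in \I_n(\C)$ the subspace $\Sigma_R := (\mathrm{pr}|_R)^{-1}(\underline{\Sigma})\leq R$ is automatically isotropic and horizontal, inheriting both properties from $R$. The assignment $R\longmapsto(\Sigma_R,R)$ is then a bijection $\I_n(\C)\longrightarrow \pi^{-1}(\underline{\Sigma})$, whose inverse is the forgetful map $(\Sigma,R)\longmapsto R$. Letting $\underline{\Sigma}$ vary produces a canonical trivialization $\I_{s,n}(\C)\cong \Gr(n,s)\times \I_n(\C)$.

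To conclude, the previous corollary, combined with the polarization $p\longmapsto \hat{p}$, identifies $\I_n(\C)$ with the space $S^kL^\ast\otimes_\R N$ of $N$-valued symmetric $k$-forms on $L$. Since the bundle $T=\Gr(n,s)\times L$ is literally trivial, \eqref{eqAlgebra} gives $S^kT^\ast\cong \Gr(n,s)\times S^kL^\ast$, and the composition of the two identifications yields the claim (modulo the implicit $\otimes_\R N$ factor reflecting the codomain of the polynomials). No genuine obstacle is expected; the only point deserving care is to check that both $\pi$ and the fibrewise bijection are built out of intrinsic data---the horizontal projection $\mathrm{pr}$ and the incidence $\Sigma\leq R$---so that the resulting identification is canonical rather than the artefact of a splitting.
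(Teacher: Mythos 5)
Your proposal is correct and is essentially the paper's own argument run in the opposite direction: the paper builds the map $(\Sigma_0,p)\mapsto(\Sigma_0^{(p)},L_p)$ from $\Gr(n,s)\times S^kL^\ast$ to the flag manifold, while you construct its inverse $(\Sigma,R)\mapsto(\underline{\Sigma},R)$ and check that the lift $\Sigma_R=(\mathrm{pr}|_R)^{-1}(\underline{\Sigma})$ recovers $\Sigma$ uniquely. Your parenthetical remark about the suppressed $\otimes_\R N$ factor matches a corresponding (harmless) omission in the paper itself.
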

\begin{proof}
It is enough to identify $\I_n(\C)$ with $S^kL^\ast$, and observe that $S^kT^\ast$ is the trivial bundle $\Gr(n,s)\times S^kL^\ast$. Indeed, a pair $(\Sigma_0, p)\in \Gr(n,s)\times S^kL^\ast$  can be identified with the integral flag $({\Sigma_0^{(p)}},L_p)$ where $L_p$ is the element of $\I_n(\C)$ which corresponds to $p\in S^kL^\ast$, and ${\Sigma_0^{(p)}}$ is the image of $\Sigma_0\subseteq\R^n\equiv L$ in the identification $L\equiv L_p$ (see also Figure \ref{Fig}).
\end{proof}
\begin{corollary}
  $\I_{s,n}(\C)$ is a smooth manifold and
  \begin{equation}
\dim \I_{s,n}(\C) = s(n-s)+ {n+k-1\choose k}.
\end{equation}
\end{corollary}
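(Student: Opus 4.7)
The plan is to apply Lemma~\ref{lemTrivi} directly, so that almost no additional work is needed beyond recalling two standard dimension counts. By that lemma, $\I_{s,n}(\C)$ is canonically identified with $S^k T^\ast$, and since $T = \Gr(n,s)\times L$ is the trivial bundle, $S^k T^\ast$ is nothing but the trivial vector bundle $\Gr(n,s)\times S^k L^\ast$. The product of two smooth manifolds is a smooth manifold, so smoothness of $\I_{s,n}(\C)$ reduces to the well--known fact that $\Gr(n,s)$ is a compact smooth manifold (for example, covered by the standard affine charts identifying a neighbourhood of $\Sigma_0\in\Gr(n,s)$ with the space of linear maps from $\Sigma_0$ to a fixed complement).

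For the dimension, I would simply add the dimensions of the two factors. First, the classical computation via the affine chart above gives $\dim\Gr(n,s)=s(n-s)$. Second, $\dim S^k L^\ast$ is the number of degree--$k$ monomials in $n$ variables, which by a stars--and--bars count equals $\binom{n+k-1}{k}$. Summing yields the stated formula.

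The only point that requires a second of attention is that the bijection provided by Lemma~\ref{lemTrivi} is in fact a diffeomorphism onto the product, and not merely a set--theoretic identification; but this is transparent from the explicit construction in that lemma's proof, where the pair $(\Sigma_0,p)$ is mapped to $(\Sigma_0^{(p)},L_p)$, and both coordinates depend smoothly on $(\Sigma_0,p)\in \Gr(n,s)\times S^k L^\ast$. There is therefore no genuine obstacle: the corollary is essentially a bookkeeping consequence of Lemma~\ref{lemTrivi} together with the standard dimension formulas for the Grassmannian and for the space of homogeneous polynomials.
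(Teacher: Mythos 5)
Your proposal is correct and coincides with the paper's (implicit) argument: the corollary is stated as an immediate consequence of Lemma~\ref{lemTrivi}, via the identification of $\I_{s,n}(\C)$ with the trivial bundle $\Gr(n,s)\times S^k L^\ast$ and the standard dimension counts $\dim\Gr(n,s)=s(n-s)$ and $\dim S^k L^\ast=\binom{n+k-1}{k}$. Your added remark that the identification is a diffeomorphism, not merely a bijection, is a reasonable point of care that the paper leaves tacit.
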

Observe that, from its bare definition \eqref{eqDefIln}, it is not so evident that $\I_{s,n}(\C)$ possesses the trivial affine bundle structure over $\Gr(n,s)$ revealed by above Lemma \ref{lemTrivi}. This insight on     $\I_{s,n}(\C)$ is crucial in order to understand the structure of $\I_{s}(\C)$. Indeed, there is a natural surjection 
$
\I_{s}(\C)\longrightarrow \Gr(n,s).
$
\begin{corollary}
 Diagram
 \begin{equation}\label{eqPrimaSequenza}
\xymatrix{\I_{s,n}(\C)\ar[rr]\ar[dr] && \I_{s}(\C)\ar[dl]\\ &\Gr(n,s)&}
\end{equation}
is commutative.
\end{corollary}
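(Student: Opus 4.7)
The plan is to unwind the three arrows and verify that they agree by direct inspection of the identifications already in use. The top arrow is the forgetful map $(\Sigma,R)\mapsto\Sigma$. The right diagonal $\I_s(\C)\to\Gr(n,s)$ is the tautological shadow map sending a horizontal isotropic $\Sigma$ to its projection $\underline{\Sigma}\leq L$, well defined since horizontality makes the projection $\C\to L$ injective on $\Sigma$. The left diagonal, read through Lemma \ref{lemTrivi}, coincides with the bundle projection $S^kT^\ast=\Gr(n,s)\times S^kL^\ast\to\Gr(n,s)$, which in terms of flags sends $(\Sigma_0^{(p)},L_p)$ to $\Sigma_0$.

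Commutativity thus reduces to the identity $\underline{\Sigma_0^{(p)}}=\Sigma_0$. By the construction exhibited in the proof of Lemma \ref{lemTrivi}, $L_p$ is the graph of the polarization $\hat p\colon L\to S^{k-1}L^\ast\otimes N$, so the canonical isomorphism $L\equiv L_p$ used to define $\Sigma_0^{(p)}$ is $l\mapsto l+\hat p(l)$. Consequently $\Sigma_0^{(p)}=\{l+\hat p(l)\mid l\in\Sigma_0\}$, and projecting along the second summand of \eqref{eqDefC} returns exactly $\Sigma_0$.

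No genuine obstacle is anticipated: the claim is a bookkeeping verification, and the only pitfall is to forget that the left diagonal is not defined directly on pairs $(\Sigma,R)$ but through the trivialization furnished by Lemma \ref{lemTrivi}. Once this is kept in mind the argument becomes essentially tautological, which is consistent with its role in the paper — diagram \eqref{eqPrimaSequenza} is intended not as a deep statement but as the organizing device that will let the structure of $\I_{s,n}(\C)$ (already known) be transferred to $\I_s(\C)$ in the subsequent proof of Theorem \ref{ThEgregio}.
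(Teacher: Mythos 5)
Your proof is correct and matches the paper's (implicit) reasoning: the paper states this corollary without proof, treating it as immediate from the trivialization of Lemma \ref{lemTrivi}, and the key identity you isolate, $\underline{\Sigma_0^{(p)}}=\Sigma_0$, is exactly the fact the paper records later in the proof of Theorem \ref{ThEgregio}. Your unwinding of the three arrows is a faithful filling-in of that omitted verification.
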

\section{Proof of Theorem \ref{ThEgregio}}\label{secPrTh}
The identification proved in Lemma \ref{lemTrivi},   may lead to suspect that the sequence  \eqref{eqPrimaSequenza} corresponds to the rightmost  triangle  from \eqref{eqSUdual}. As it turns out, this is correct if one replaces $\mu_{\SSigma}$ by its power $\mu^k_{\SSigma}$.\par
Before proving  Theorem \ref{ThEgregio}, it is convenient to explain it intuitively as follows.
\begin{remark}
  $\I_{s,n}(\C)$ is an affine bundle modeled over $S^kT^\ast$, while $\I_{s,n}(\C)\longrightarrow \I_{s}(\C)$ (see diagram \eqref{eqPrimaSequenza} above) is   modeled over $S^k\left(\frac{T}{\SSigma}\right)^\ast$, that is, over $\mu_{\SSigma}^k\cap S^k\SSigma^\ast$. Hence, it is natural to expect that $\I_{s}(\C)$ is modeled over $\frac{S^kT^\ast}{\mu_{\SSigma}^k\cap S^k\SSigma^\ast}$. 
\end{remark}
\begin{proof}[Proof of Theorem \ref{ThEgregio} ]
 Fix $\Sigma_0\in \Gr(n,s)$, and observe that 
 \begin{equation}\label{eqExJayMenoUno}
 \frac{S^k L^\ast}{S^k\left(\frac{L}{\Sigma_0}\right)^\ast}
\end{equation}
is  the   $k\Th$ homogeneous component of the fiber of    $ \frac{S^\bullet T^\ast}{\mu^k_{\SSigma}} $   over $\Sigma_0$.
On the other hand,
\begin{equation}\nonumber
\I_{s}(\C)_{\Sigma_0}=\{\Sigma\in \I_{s}(\C)\mid \underline{\Sigma}=\Sigma_0\},
\end{equation}
where $\underline{\Sigma}$ is the image of $\Sigma$ under the projection $\C\to L\cong\R^n$.\par
As in the proof of Lemma \ref{lemTrivi}, write down an element of  $\I_{s,n}(\C)_{\Sigma_0}$   as a pair
\begin{equation}\nonumber
({\Sigma_0^{(p)}},L_p),
\end{equation}
where, in the above notation,  $\underline{{\Sigma_0^{(p)}}}=\Sigma_0$. Observe also that  the bundle projection \eqref{eqPrimaSequenza} determines the map
\begin{eqnarray*}
\I_{s,n}(\C)_{\Sigma_0} &\longrightarrow & \I_{s}(\C)_{\Sigma_0},\\
({\Sigma_0^{(p)}},L_p) &\longmapsto & {\Sigma_0^{(p)}}.
\end{eqnarray*}
Now we can show that the  free and transitive action of $S^k L^\ast$ over $\I_{s,n}(\C)_{\Sigma_0}$ descends to a transitive but not free action over $ \I_{s}(\C)_{\Sigma_0}$ whose stabilizer is precisely $S^k\left(\frac{L}{\Sigma_0}\right)^\ast$, thus proving that \eqref{eqExJayMenoUno} is isomorphic to $\I_{s}(\C)_{\Sigma_0}$.\par
To this end, take $q\in S^k L^\ast$ and ${\Sigma_0^{(p)}}\in \I_{s}(\C)_{\Sigma_0}$, and act as follows:
\begin{equation}\label{eqCoolAction}
\Sigma_0^{(p)}\stackrel{q}{\longmapsto} \Sigma_0^{(p+q)}.
\end{equation}
Then the stabilizer at 0 is simply the subspace of polynomials $q$ such that $\Sigma_0^{(q)}=\Sigma_0$; but, by definition, $\Sigma_0^{(q)}\subseteq L_q$ is the graph of the linear map $q:L\longrightarrow S^{k-1}L^\ast$, restricted to $\Sigma_0$: hence, it coincides with $\Sigma_0$ if and only if the polarization of $q$ vanishes on $\Sigma_0$, i.e., if and only if $q\in S^k\left(\frac{L}{\Sigma_0}\right)^\ast$ (see also Figure \ref{Fig}).
\end{proof}
%
%
%
%
%
\begin{corollary}
 $\I_{s}(\C)$ is a smooth manifold and
 \begin{equation}\nonumber
\dim \I_{s}(\C) = l(n-l)+\left[{n+k-1\choose k} - {n-l + k -1 \choose k}\right]m.
\end{equation}
\end{corollary}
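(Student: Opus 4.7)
The plan is to read off both the smoothness and the dimension directly from the affine bundle structure established in Theorem \ref{ThEgregio}, interpreting $l$ in the statement as $s$. Since $\I_s(\C)$ is an affine bundle over the smooth manifold $\Gr(n,s)$, it is automatically smooth, and its dimension is the sum of the dimension of the base and the dimension of a typical fibre.

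For the base, recall the classical formula $\dim \Gr(n,s) = s(n-s)$. For the fibre, Theorem \ref{ThEgregio} identifies it (up to the choice of a basepoint) with the $k\Th$ homogeneous component of
\begin{equation}\nonumber
\frac{S^\bullet T^\ast}{\mu^k_{\SSigma}}\otimes_\R N
\end{equation}
evaluated at a fixed $\Sigma_0\in\Gr(n,s)$, i.e.\ with $\bigl(S^kL^\ast/(\mu_{\Sigma_0}^k\cap S^kL^\ast)\bigr)\otimes_\R N$. Thus the main computation is to identify the degree--$k$ part of $\mu_{\Sigma_0}^k$ inside $S^\bullet L^\ast$.

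The key step, and the only part that is not completely formal, is the identification
\begin{equation}\nonumber
\mu_{\Sigma_0}^k\cap S^kL^\ast \;=\; S^k\!\left(\tfrac{L}{\Sigma_0}\right)^{\!\ast}.
\end{equation}
To see this, write $L^\ast = \Sigma_0^\perp \oplus U$, where $\Sigma_0^\perp := (L/\Sigma_0)^\ast$ is the annihilator of $\Sigma_0$ and $U$ is any complement, so that $S^\bullet L^\ast = S^\bullet\Sigma_0^\perp \otimes S^\bullet U$ as graded algebras. The ideal $\mu_{\Sigma_0}$ is generated by $\Sigma_0^\perp$, hence $\mu_{\Sigma_0}^k$ is generated (as an ideal) by $S^k\Sigma_0^\perp$; the only summand in degree exactly $k$ that this generating set produces is $S^k\Sigma_0^\perp$ itself, yielding the claimed equality. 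Equivalently, one may argue that this is precisely the stabilizer condition already encountered in the proof of Theorem \ref{ThEgregio}: a polynomial $q\in S^kL^\ast$ has vanishing polarization on $\Sigma_0$ if and only if $q\in S^k(L/\Sigma_0)^\ast$.

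Combining these observations and using $\dim S^jV = \binom{\dim V + j - 1}{j}$ with $\dim L = n$ and $\dim (L/\Sigma_0) = n-s$, the fibre dimension equals $\bigl[\binom{n+k-1}{k}-\binom{n-s+k-1}{k}\bigr]\cdot\dim N$, and adding $s(n-s)$ from the base gives the announced formula (with $l=s$). The only place where one must be careful is to check that the affine model bundle has constant rank across $\Gr(n,s)$, which is automatic here because $\dim\Sigma_0 = s$ does not depend on the chosen point.
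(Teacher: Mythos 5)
Your argument is correct and is exactly the route the paper intends: the corollary is stated without proof as an immediate consequence of Theorem \ref{ThEgregio}, namely smoothness from the affine bundle structure over $\Gr(n,s)$ and the dimension as $\dim\Gr(n,s)=s(n-s)$ plus the fibre dimension, where the identification $\mu_{\Sigma_0}^k\cap S^kL^\ast=S^k(L/\Sigma_0)^\ast$ you spell out is precisely the stabilizer computation already carried out in the paper's proof of the theorem. Your reading of $l$ as $s$ is also the right way to resolve the paper's notational slip.
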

\subsection{The polar distributions }
We generalize now the notion of polar distribution given in Subsection \ref{subExPolar}.
\begin{corollary}
The linear map
\begin{eqnarray}
T_\Sigma\I_s(\C) &\stackrel{\sharp}{\longrightarrow }&S^k\Sigma^\ast\otimes_\R N,\nonumber\\
p&\longmapsto & p^\sharp:= \Omega(\,\cdot\, , p(\,\cdot\,)),\label{eqDefSharpGEN}
\end{eqnarray}
generalizing \eqref{eqDefSharp} is well--defined and surjective.
\end{corollary}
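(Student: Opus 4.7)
The plan is to split the claim into three pieces: well-definedness of $\sharp$ as an $N$-valued $k$-multilinear form on $\Sigma$; vanishing of $\sharp$ on the horizontal part of $T_\Sigma\I_s(\C)$ for the affine bundle of Theorem~\ref{ThEgregio}; and identification of $\sharp$ on the vertical part with the canonical restriction $S^kL^\ast \otimes N \twoheadrightarrow S^k\Sigma^\ast \otimes N$. Together, the last two yield both surjectivity and the fact that the image of $\sharp$ lies in the symmetric summand $S^k\Sigma^\ast\otimes N$.

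For well-definedness, a tangent vector $p \in T_\Sigma \I_s(\C) \subseteq \Sigma^\ast \otimes \C/\Sigma$ is a linear map $p:\Sigma \to \C/\Sigma$. For $\sigma_1, \sigma_2 \in \Sigma$, any two lifts of $p(\sigma_2)$ to $\C$ differ by an element of $\Sigma$, on which $\Omega(\sigma_1,\cdot)$ vanishes by isotropy; horizontality identifies $\Sigma$ with $\underline\Sigma \leq L$, so the value $\Omega(\sigma_1, p(\sigma_2)) \in S^{k-2}L^\ast \otimes N$ can be restricted to $\Sigma$. This produces the $k$-multilinear form $(\sigma_1,\ldots,\sigma_k) \mapsto \Omega(\sigma_1, p(\sigma_2))(\underline{\sigma_3},\ldots,\underline{\sigma_k})$, a priori valued in $(\Sigma^\ast)^{\otimes 2}\otimes S^{k-2}\Sigma^\ast \otimes N$.

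Parameterize a neighborhood of $\Sigma = \Sigma_0^{(P)}$ in $\I_s(\C)$ using Theorem~\ref{ThEgregio} by pairs $(\Sigma_0^{(\phi)}, P+tQ)$. A horizontal tangent (varying $\phi \in T_{\Sigma_0}\Gr(n,s)$ with $P$ fixed) sends $x_\alpha + \hat P(x_\alpha) \in \Sigma$ to $\dot\phi_\alpha^\beta(x_\beta + \hat P(x_\beta))$. Evaluating $\Omega$ on two such values, the $L \times L$ and $V \times V$ contributions vanish since both $L$ and $V := S^{k-1}L^\ast \otimes N$ are $\Omega$-isotropic, while the cross terms $\Omega(x_{\alpha_1},\hat P(x_{\alpha_2}))$ and $\Omega(\hat P(x_{\alpha_1}), x_{\alpha_2})$ evaluate to $P(x_{\alpha_2},x_{\alpha_1},\cdots)$ and $-P(x_{\alpha_1},x_{\alpha_2},\cdots)$, which cancel by symmetry of $P$. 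Hence $\sharp$ annihilates horizontal vectors and factors through the vertical subspace $V_\Sigma$. A vertical tangent, coming from $[Q] \in S^kL^\ast \otimes N / S^k(L/\Sigma_0)^\ast \otimes N$, is the infinitesimal generator $p_Q : x_\alpha + \hat P(x_\alpha) \mapsto \hat Q(x_\alpha) \in V \subseteq \C/\Sigma$ of the action~\eqref{eqCoolAction}. Iterating the polarization identity~\eqref{eqDefOmega} gives $p_Q^\sharp(\sigma_1,\ldots,\sigma_k) = Q(\underline{\sigma_1},\ldots,\underline{\sigma_k})$ up to a universal nonzero constant, identifying $\sharp|_{V_\Sigma}$ with the surjective restriction map $S^kL^\ast \otimes N \twoheadrightarrow S^k\Sigma^\ast \otimes N$. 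This simultaneously delivers surjectivity of $\sharp$ and places its image in the symmetric summand.

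The main obstacle is the bookkeeping of polarizations: matching the convention $\hat p = \frac{1}{k}dp$ with the coordinate expression of $\Omega$ to verify both that iterating $\Omega$ twice on a vertical generator $p_Q$ reproduces the symmetric polynomial $Q$ up to a universal constant, and that the cross terms in the horizontal computation cancel by the symmetry of $P$ in its first two slots. Once these identities are in place, both the target space and the surjectivity of $\sharp$ follow immediately from the structural content of Theorem~\ref{ThEgregio}.
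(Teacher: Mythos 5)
Your argument is correct, but it is organized differently from the paper's. The paper in fact states this corollary without a general proof: it relies on the reader transporting the coordinate computation of Subsection~\ref{subExPolar} (the case $n=3$, $s=2$, $k=2$, $m=1$), where well--definedness comes from the inclusion $T_\Sigma\I_2(\C)\leq\Sigma^\ast\otimes_\R \frac{\C}{\Sigma}$ together with the isotropy of $\Sigma$, and symmetry and surjectivity are read off from the fact that an isotropic plane near $\Sigma$ is the graph of the restriction of a symmetric $\widetilde{p}$. You instead exploit the affine bundle structure of Theorem~\ref{ThEgregio}: choosing a maximal integral element $L_P\supseteq\Sigma$ splits $T_\Sigma\I_s(\C)$ into a horizontal part (deformations of $\underline{\Sigma}$ inside the fixed Lagrangian $L_P$), which $\sharp$ annihilates precisely because $L_P$ is $\Omega$--isotropic, and a vertical part, on which $\sharp$ becomes, up to a universal nonzero constant, the restriction map $S^kL^\ast\otimes N\twoheadrightarrow S^k\Sigma^\ast\otimes N$ induced by the action \eqref{eqCoolAction}. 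This buys you a genuinely general, coordinate--free proof of both the symmetry of the target and the surjectivity, and as a bonus it exhibits the polar plane $P_\Sigma$ explicitly as the sum of the horizontal subspace with the kernel of the restriction map, which makes the dimension count \eqref{eqDimPi} transparent. Two points are worth flagging: the splitting depends on the choice of $P$, so you should note that $\sharp$ itself is defined independently of it and that the two summands span $T_\Sigma\I_s(\C)$ for any admissible choice; and you implicitly use that every $\Sigma\in\I_s(\C)$ extends to some $L_P$, i.e.\ the surjectivity of $\I_{s,n}(\C)\to\I_s(\C)$, which the paper secures in the proof of Theorem~\ref{ThEgregio}. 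Neither is a gap, but both deserve a sentence.
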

\begin{definition}
 $P_\Sigma:=\ker\sharp$ is the \emph{polar plane} at $\Sigma$, and $\Sigma\stackrel{P}{\longmapsto} P_\Sigma$ is the \emph{polar distribution} on $\I_s(\C)$.
\end{definition}
\begin{corollary}
 \begin{align}
\dim P&= s(n-s)+\nonumber  \\
&+\left[{n+k-1\choose k} - {n-s+ k -1 \choose k}- {s+k-1 \choose k}\right]m.\label{eqDimPi}
\end{align}
\end{corollary}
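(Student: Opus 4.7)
The plan is to apply the rank--nullity theorem to the surjection $\sharp$ established in the preceding corollary, i.e.\ to the short exact sequence
\begin{equation}
0 \longrightarrow P_\Sigma \longrightarrow T_\Sigma \I_s(\C) \stackrel{\sharp}{\longrightarrow} S^k \Sigma^\ast \otimes_\R N \longrightarrow 0,
\end{equation}
so that $\dim P_\Sigma = \dim \I_s(\C) - \dim(S^k\Sigma^\ast\otimes_\R N)$. Since Theorem \ref{ThEgregio} exhibits $\I_s(\C)$ as an affine bundle over $\Gr(n,s)$, the first quantity is already available from the dimension corollary stated just after the proof of the theorem.

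Concretely, I would unpack that formula as follows. The base contributes $\dim \Gr(n,s)=s(n-s)$. The fibrewise contribution is the rank of the $k\Th$ homogeneous component of $\frac{S^\bullet T^\ast}{\mu^k_\SSigma}\otimes_\R N$, which equals $\dim S^k L^\ast - \dim S^k(L/\Sigma_0)^\ast = \binom{n+k-1}{k}-\binom{n-s+k-1}{k}$, multiplied by $m=\dim N$ because of the tensor factor $N$. Combined, this gives exactly $\dim \I_s(\C)=s(n-s)+\left[\binom{n+k-1}{k}-\binom{n-s+k-1}{k}\right]m$.

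Next I would compute the dimension of the codomain of $\sharp$: since $\dim \Sigma = s$, one has $\dim(S^k\Sigma^\ast\otimes_\R N)=\binom{s+k-1}{k}\,m$. Subtracting this from $\dim \I_s(\C)$ yields precisely the right--hand side of \eqref{eqDimPi}, concluding the proof.

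The only input I cannot recover purely from combinatorics is the surjectivity, \emph{and constant rank}, of $\sharp$ over the whole of $\I_s(\C)$ (so that rank--nullity produces the correct pointwise dimension everywhere, not merely on an open dense subset). This is the single delicate point, but it is exactly the content of the preceding corollary and so may be used as a black box here. Consequently I do not foresee any substantial obstacle: once Theorem \ref{ThEgregio} and the surjectivity of $\sharp$ are in hand, the statement reduces to bookkeeping of ranks of homogeneous components of symmetric algebras.
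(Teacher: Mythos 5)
Your proposal is correct and is exactly the (implicit) argument the paper intends: the corollary is stated without proof, and the only available route is rank--nullity for the surjection $\sharp$ combined with the dimension count of $\I_s(\C)$ coming from Theorem \ref{ThEgregio}. Your bookkeeping of the binomial coefficients checks out, so there is nothing to add.
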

\section{Examples}\label{exa}
\subsection{Isotropic lines in $1\St$ order jets}
The case of one--dimensional isotropic subspaces of the Cartan plane on $J^1(E,n)$ is particularly simple, since all lines are isotropic. In this case, $\Gr(n,1)=\P^{n-1}$ and  $\I_1(\C)$ can be written as $\check{\P}\C$, the affine subspace of $\P\C$ made of horizontal lines. Hence, diagram \eqref{eqPrimaSequenza} reads
\begin{equation}\nonumber
\xymatrix{
\P^{n-1}\times (S^2L^\ast\otimes_\R N) \ar[rr]\ar[dr]&& \check{\P}\C\ar[dl]\\
& \P^{n-1}&
}
\end{equation}
and, by Theorem \ref{ThEgregio}, $\check{\P}\C$ is modeled by
\begin{equation}\label{FibraCasoSemplice}
\frac{S^2(T^\ast)}{S^2\left(\frac{T}{\Sigma}\right)^\ast}\otimes N.
\end{equation}
In particular, the rank of $\check{\P}\C$ is
$
m\cdot\dim\left(S^2(T^\ast)\right)-\dim\left(S^2\left(\frac{T}{\Sigma}\right)^\ast\right)=m\cdot n
$, 
so that  
$
\dim \check{\P}\C=n-1+mn=n-1+m+(n-1)m 
$, 
which is exactly the dimension of the first jet--prolongation of a rank--$m$ bundle $\eta$ over $\P^{n-1}$.\par
Observe that, if \eqref{FibraCasoSemplice} is split as 
$
\left(S^2\Sigma^\ast\oplus\left(\left(\frac{T}{\Sigma}\right)^\ast\otimes\Sigma^\ast\right)\right)\otimes N
$, 
then  the sub--bundle 
$
V_1:=\left(\frac{T}{\Sigma}\right)^\ast\otimes\Sigma^\ast
$, 
of rank $(n-1)\cdot m$ corresponds to the vertical distribution $J^1(\eta)\to J^0(\eta)$. On the other hand, the polar distribution $P_1$, whose dimension is $\dim V_1 +n -1$, corresponds to the Cartan distribution on $J^1(\eta)$ \cite{MichiForthcoming}. 
\begin{lemma}\label{lemmaEsercizio1}
 $\check{\P}\C$, with its polar distribution, is isomorphic to the $1\St$ jet prolongation $J^1(\eta)$, with its Cartan distribution, where $\eta$ is the rank--$m$ bundle $S^2\SSigma^\ast\otimes N$, being $\SSigma\to \P^{n-1}$ the tautological line bundle.
\end{lemma}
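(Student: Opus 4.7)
The plan is to exhibit a canonical bundle isomorphism $\Phi:\check{\P}\C\to J^1(\eta)$ over $\P^{n-1}$ that intertwines the polar and Cartan distributions. Theorem~\ref{ThEgregio} with $k=2$ and $s=1$ (note $\I_1(\C)=\check{\P}\C$, since all lines are automatically $\Omega$--isotropic) presents $\check{\P}\C$ as an affine bundle over $\P^{n-1}=\Gr(n,1)$ modeled on $\frac{S^2 T^\ast}{S^2(T/\SSigma)^\ast}\otimes N$. The polarization--and--restriction map $S^2T^\ast\to \SSigma^\ast\otimes T^\ast$, sending $q$ to $q(\,\cdot\,,\,\cdot\,)|_{\SSigma\otimes T}$, is surjective with kernel exactly $S^2(T/\SSigma)^\ast$ (by symmetry any $q$ in the kernel vanishes on $T\otimes\SSigma$ as well, hence factors through $T/\SSigma$), so the model is canonically $\SSigma^\ast\otimes T^\ast\otimes N$.

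Next, tensoring the tautological sequence $0\to(T/\SSigma)^\ast\to T^\ast\to \SSigma^\ast\to 0$ with $\SSigma^\ast\otimes N$ yields
\begin{equation*}
0 \;\longrightarrow\; \SSigma^\ast\otimes(T/\SSigma)^\ast\otimes N \;\longrightarrow\; \check{\P}\C \;\longrightarrow\; S^2\SSigma^\ast\otimes N \;\longrightarrow\; 0,
\end{equation*}
whose rightmost term is $\eta$ (since $\SSigma^\ast\otimes\SSigma^\ast=S^2\SSigma^\ast$ for a line bundle). Using the Euler--type identity $T\P^{n-1}\cong \SSigma^\ast\otimes(T/\SSigma)$ and $\SSigma\otimes\SSigma^\ast\cong\mathcal{O}_{\P^{n-1}}$, the leftmost term rearranges as
\begin{equation*}
\SSigma^\ast\otimes(T/\SSigma)^\ast\otimes N \;\cong\; \bigl(\SSigma\otimes(T/\SSigma)^\ast\bigr)\otimes\bigl(S^2\SSigma^\ast\otimes N\bigr) \;=\; T^\ast\P^{n-1}\otimes\eta,
\end{equation*}
so the sequence is exactly the defining affine structure of $J^1(\eta)\to J^0(\eta)\to\P^{n-1}$. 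Assembling these identifications produces the canonical bundle isomorphism $\Phi$.

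Finally, one verifies $\Phi$ sends the polar distribution $P$ to the Cartan distribution. The ranks agree by \eqref{eqDimPi} specialized to $s=1$, $k=2$, which gives $(n-1)(m+1)$, matching the Cartan rank on $J^1$ of a rank--$m$ bundle over an $(n-1)$--dimensional base. For the matching itself I would choose local affine coordinates $(u^i)$ on $\P^{n-1}$ around $[e_0]$, lift them to coordinates $(u^i,\phi^j_a)$ on $\check{\P}\C$ (the underlying line being spanned by $e_0+u^i e_i+\phi^j_a\,\xi^a\otimes y_j$), and match against jet coordinates $(u^i,z^j,z^j_i)$ on $J^1(\eta)$: the $\eta$--coordinate reads $z^j=\phi^j_0+u^i\phi^j_i$ (the restriction of $\phi$ to the tautological line), while $z^j_i$ is extracted from the complementary part of $\phi$ via the canonical projection to $T^\ast\P^{n-1}\otimes\eta$. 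A direct computation of $\sharp=\Omega(\,\cdot\,,p(\,\cdot\,))$ in these coordinates should then identify $\ker\sharp$ with the annihilator of the contact $1$--forms $dz^j-z^j_i\,du^i$. The main obstacle is precisely this last step: since the tautological sequence does not split canonically, the assignment of $z^j_i$ from $\phi$ is non--canonical, and one must verify that the polar condition $\sharp(p)=0$ ends up translated into the contact condition independently of the chosen local trivialization.
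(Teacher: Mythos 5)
Your structural analysis is sound as far as it goes: the identification of the model fibre $\frac{S^2T^\ast}{S^2(T/\SSigma)^\ast}\otimes N$ with $\SSigma^\ast\otimes T^\ast\otimes N$, the short exact sequence obtained by tensoring the dual tautological sequence with $\SSigma^\ast\otimes N$, and the Euler--sequence rewriting of its kernel as $T^\ast\P^{n-1}\otimes\eta$ all check out, and they essentially reproduce the discussion the paper gives immediately before the Lemma (the splitting of \eqref{FibraCasoSemplice} into $S^2\Sigma^\ast\otimes N$ plus $V_1\otimes N$ and the rank count). But the argument stops exactly where the content of the Lemma begins, and you say so yourself. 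Two things are missing. First, a short exact sequence $0\to T^\ast\P^{n-1}\otimes\eta\to V\to\eta\to 0$ does not by itself identify $V$ with $J^1(\eta)$: the jet sequence is one particular extension, and --- more to the point --- the Cartan distribution on $J^1(\eta)$ is defined through the jet structure (the contact forms $dz^j-z^j_i\,du^i$), not through the sub/quotient data alone; so ``assembling these identifications produces the canonical bundle isomorphism $\Phi$'' is not yet a construction of $\Phi$. Second, and decisively, the verification that $\ker\sharp$ is carried onto the Cartan distribution is the entire point of the Lemma, and the rank count from \eqref{eqDimPi} is nowhere near sufficient: plenty of rank--$(n-1)(m+1)$ distributions on a manifold of this dimension are not Cartan distributions.

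The paper closes precisely this gap by direct computation: it writes the polar condition in the projective coordinates $(b^\alpha,f^j_i)$, obtaining \eqref{eqFinaliCampiExInvOrd1}, exhibits the spanning fields \eqref{eqCamCoordGlob1p}--\eqref{eqCamCoordGlob2p} together with their commutator \eqref{eqCamCoordGlob3p}, and then produces the coordinate change \eqref{eqCamCoordGlob1}--\eqref{eqCamCoordGlob3} --- the key trick being $f_1^j=2v^j-v^j_\alpha y^\alpha$, which is essentially your $z^j=\phi^j_0+u^i\phi^j_i$ --- turning those fields into the standard nonholonomic frame $\partial_{y^\alpha}+v^j_\alpha\partial_{v^j}$, $\partial_{v^j_\alpha}$ of $J^1(\R^{n-1+m},n-1)$. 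That computation is exactly the step you defer as ``the main obstacle,'' so as written the proposal does not prove the Lemma; carrying out the coordinate evaluation of $\sharp$ that you sketch would complete it and would land you on the paper's own argument.
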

\begin{proof}
To begin with, choose local coordinates $(x^i,u^j,u_i^j)$ on $J^1(E,n)$, with $i\in\{1,\ldots, n\}$ and $j\in\{1,\ldots, m\}$. Then, locally, 
\begin{equation}\label{eqCoordinateCI}
\C=\Span{D_i,\partial_{u^j_i}},
\end{equation}
and 
$
\Omega=dx^i\wedge du_i^j\otimes\partial_{u^j} 
$.
Let $\Sigma=\Span{\ell}\in\P\C_\theta$, with
\begin{equation}\label{eqGeneratoreElle}
\ell=D_1|_\theta+b^\alpha D_\alpha|_\theta+f^j_i\left.\partial_{u_i^j}\right|_\theta,
\end{equation} 
where   $\alpha\in\{2,\ldots, n\}$. Then
\begin{equation}\label{eqCoordProjPiCi}
(b^2,\ldots,b^n, f^j_1,\ldots,f^j_n)
\end{equation}
 is a coordinate system for the open neighborhood $\Sigma^\ast\otimes \Sigma^c$  of $\Sigma$, in the    projective space $\P\C=\R\P ^{n(m+1)-1}$, where $
\Sigma^c:=\Span{D_\alpha|_\theta,\left.\partial_{u_i^j}\right|_\theta} 
$ 
is a complement of $\Sigma$. Since  $\Sigma^\ast\otimes \Sigma^c$    identifies with $T_\Sigma\P\C$ by means of the correspondence
\begin{equation}\nonumber
\left(h:\ell\mapsto B^\alpha D_\alpha|_\theta+F^j_i\left.\partial_{u^j_i}\right|_\theta\right)\leftrightarrow \xi_h=B^\alpha\partial_{b^\alpha}|_\theta+F^j_i\left.\partial_{f^j_i}\right|_\theta,\nonumber
\end{equation}
the fact that $\xi_h$ belongs to $P_\Sigma$ reflects on a condition on the 
  coefficients $B^\alpha$ and $F^j_i$. More precisely, 
\begin{eqnarray}
\xi_h\in P_\Sigma &\Leftrightarrow& \Omega_\theta(\ell,h(\ell))=0\nonumber\\
 &\Leftrightarrow& \Omega_\theta(D_1|_\theta+b^\alpha D_\alpha|_\theta+f^j_i\partial_{u_i^j}|_\theta,\nonumber\\  && B^{\alpha} D_{\alpha} |_\theta+F^j_i\partial_{u^j_i}|_\theta)=0\nonumber\\
  &\Leftrightarrow& F_i^j\Omega  \left.\left(D_1,\partial_{u^j_i}\right)\right|_\theta+b^\alpha F_i^j \Omega  \left.\left(D_\alpha, \partial_{u^j_i}\right)\right|_\theta +\nonumber\\ && + f_i^jB^{\alpha} \Omega\left.\left(\partial_{u^j_i},D_{\alpha} \right)\right|_\theta=0\nonumber\\
    &\Leftrightarrow& (F_1^j+b^\alpha F_\alpha^j-f_\alpha^j B^\alpha)\partial_{u^j}|_\theta=0\nonumber\\
    &\Leftrightarrow& F_1^j=f_\alpha^j B^\alpha-b^\alpha F_\alpha^j\quad \forall j=1,2,\ldots, m.\label{eqFinaliCampiExInvOrd1}
\end{eqnarray}
Equations \eqref{eqFinaliCampiExInvOrd1} shows that  the elements of  $P_\Sigma$ are in one--to--one correspondence with  the $(n-1)m$--tuples $(B^2,\ldots,B^n, F^j_2,\ldots,F^j_n)$. In particular,  we obtain a basis  
$\{
\xi_2,\ldots, \xi_n, \xi_2^j,\ldots, \xi_n^j 
\}$ 
of $P_\Sigma$   by choosing $F_\alpha^j=0$ and $B^\alpha=\delta^\alpha_\beta$ for the  $X_\beta$'s (with $\beta=2,3,\ldots,n$), and then choosing $B_\alpha=0$ and $F_\alpha^j=\delta_\alpha^\beta\delta_k^j$ for the $X^\beta_k$'s (with $\beta=2,3,\ldots,n$ and $k=1,2,\ldots, m$).
Notice that the values at $\Sigma$ of the vector fields
\begin{eqnarray}
X_\alpha &:=& \partial_{b^\alpha}+ f_\alpha^j \partial_{f_1^j},\label{eqCamCoordGlob1p}\\
X^\alpha_j &:=& \partial_{f_\alpha^j}- b^\alpha \partial_{f_1^j},\label{eqCamCoordGlob2p}
\end{eqnarray}
are, by construction, $\xi_\alpha$ and $\xi^\alpha_j $, respectively, i.e., $P$ is spanned by \eqref{eqCamCoordGlob1p} and \eqref{eqCamCoordGlob2p}. Moreover, 
\begin{eqnarray}
\vspace{3pt }[X_\alpha, X^\beta_j] &=& -2 \delta_\alpha^\beta\partial_{f_1^j}.\label{eqCamCoordGlob3p}
\end{eqnarray}
Observe that \eqref{eqCamCoordGlob1p}  {resembles} a total derivative, \eqref{eqCamCoordGlob2p}  is  {almost} a vertical vector field, and   \eqref{eqCamCoordGlob3p}  {is reminiscent of} the well--known commutation relation of the standard non--holonomic frame of a first order jet space. The idea is to look for a coordinate system which \virg{turns right} formulas    \eqref{eqCamCoordGlob1p}, \eqref{eqCamCoordGlob2p} and \eqref{eqCamCoordGlob3p}.  To this end, let
\begin{equation}\label{nuovecoordinateproiettive}
(y^2,\ldots, y^n, v^1,\ldots, v^m, v_2^j,\ldots,v_n^j)
\end{equation}
be a new coordinates  on  $\P\C$, given   by
\begin{eqnarray}
b^\alpha &:=& y^\alpha,\label{eqCamCoordGlob1}\\
f_\alpha^j &:=& v_\alpha^j,\label{eqCamCoordGlob2}\\
f_1^j &:=&2v^j-v^j_\alpha y^\alpha.\label{eqCamCoordGlob3}
\end{eqnarray}
Then it is immediate to see that
$
\partial_{y^\alpha}+v^j_\alpha \partial_{v^j}= X_\alpha$, 
$\partial_{v_\alpha^j}= X_j^\alpha$,  and 
 $-\delta_\alpha^\beta \partial_{v^j} = \vspace{3pt }[X_\alpha,X^\beta_j]
$.
In other words, \eqref{eqCamCoordGlob1}, \eqref{eqCamCoordGlob2} and \eqref{eqCamCoordGlob3} define a local diffeomorphism between $\P\C$ and $J^1(\R^{n-1+m},n-1)$, which sends $P$ into the Cartan distribution of $J^1(\R^{n-1+m},n-1)$.
\end{proof}
\begin{remark}
The previous proof in coordinates is an adaptation of the proof found in \cite{ThesisMichi}. A generalization of this result states that the polar distribution is a prolongation of a certain natural system of PDEs on a type of natural bundles. A precise statement with a coordinate invariant proof will appear in \cite{MichiForthcoming}.
\end{remark}
\subsection{Applications to nonlinear PDEs}
The polar distribution provides an important insight on the structure of non--maximal integral elements of $\C$ which can be effectively used to study nonlinear PDEs: indeed, all the proposed constructions stem from the contact structure on jet spaces and, as such, constitute a valuable source of invariants. In a sense, the whole geometric theory of PDEs can be seen as a chapter of contact geometry,  where the meta--symplectic structure represents  a sort of higher--order analog of the    symplectic structure on contact planes \cite{MR2352610}. The importance of non--maximal isotropic elements is that among them there are the  singular solutions of a given PDE, which, in some cases,\footnote{Multidimensional Monge--Amp\`ere equations are a remarkable and well--known example of such PDEs \cite{MR2985508}; a somewhat less familiar though formally analog case has been studied by one of us (GM) in the context of $3\Rd$ oder scalar PDEs \cite{3OrdMAEs} in two independent variables.} allow to reconstruct the equation itself. More precisely, to any $k\Th$ order PDE $\E\subseteq J^k(E,n)$ one can associate\footnote{Beware that now  the symbol $\C$ denotes   the whole Cartan distribution, not an its single plane.} its $s$--type singularity equation (originally appearing in \cite{MR923359})
\begin{equation}\nonumber
\Sigma_{[s]}\E\subseteq \I_{n-s}(\C),
\end{equation}
which is, by definition, a contact invariant of $\E$.\footnote{A pleasant review of the \virg{fold--type} case, i.e., when $s=1$, can be found in \cite{LucaChar}; additional  information about the more general cases can be retrieved from the references therein.} Furthermore, $\Sigma_{[s]}$ can be equipped with the restricted polar distribution, and this structure can be used to prove non--equivalence results.
\begin{example}[A non--equivalence result]\label{exFinale}
 The $3\Rd$ order PDE $\E:=\{u_{xxy}u_{yyy}-u_{xyy}^2=0\}$ is not contact equivalent to a quasi--linear one.
\end{example}
\begin{proof}
 It will be accomplished as follows. First, we show that $\Sigma_{[1]}\E$ contains $\check{\P} D$, where $D$ is the three--dimensional sub distribution 
\begin{equation}\label{eqDefDi}
D=\Span{D_1,D_2,\partial_{u_{xx}}}\subseteq \C.
\end{equation}
 Second, we show that the polar distribution, restricted to $\P D$, has dimension at least one. This means that $\Sigma_{[1]}\E$ contains a nonempty subset where the polar distribution is, at least, one--dimensional. Then we pass to the singularity equation of a generic quasi--linear PDE, and we show that the polar distribution is zero--dimensional in \emph{all} the points. Hence, the two cannot be contact--equivalent.\par
 Before proving the above--listed three steps, let us recall that $\E\subseteq J^3(\R^3,2)$, and also that the point $\theta\in J^2(\R^3,2)$ is fixed. The 5D Cartan distribution, which is generated by the two total derivatives\footnote{Here $D_i$ is truncated to the coefficients of $2\Nd$ order, since it is a tangent vector determined by $\theta\in J^2$, for $i=1,2$.} $D_1$ and $D_2$, and by the vertical vector fields $\partial_{u_{xx}},\partial_{u_{xy}}$ and $\partial_{u_{yy}}$, is denoted by $\C$. Hence, the singularity equation $\Sigma_{[1]}\E$ sits in $\I_1(\C)=\check{\P}\C$.\par
 In preparation for the first step, it is convenient to identify characteristic covectors with their (one--dimensional) kernels, henceforth called \emph{characteristic directions}. Then, recall\footnote{See, e.g, \cite{LucaChar}.} that the singularity equation $\Sigma_{[1]}\E$ is made of the projections of the characteristic directions for $\E$: in other words, a line $\Sigma\in \check{\P}\C$ belongs to $\Sigma_{[1]}\E$ if and only if there is a point $\widetilde{\theta}\in \E_\theta$ such that the corresponding integral plane $L_{\widetilde{\theta}}$ contains $\Sigma$, and $\Sigma$ is a characteristic direction for $\E$ at $\widetilde{\theta}$. Take now a generic line  
$
\Sigma=\Span{D_1+aD_2+c\partial_{u_{xx}}}\in \check{\P}D
$, 
where $a,c$ are projective coordinates, and observe that the point 
$
\widetilde{\theta}:=(c,0,0,0)\in J^3(\R^3,2)_\theta
$ 
 belongs, in fact, to $ \E_\theta$. Since
 \begin{equation}\label{eqElleThetaTilde}
L_{\widetilde{\theta}}=\Span{D_1+c\partial_{u_{xx}}, D_2},
\end{equation}
it is obvious that $\Sigma\subseteq L_{\widetilde{\theta}}$. Moreover, the symbol 
$
u_{yyy}(dx)^2dy-2u_{xyy}dx(dy)^2+u_{xxy}(dy)^3
$
 of $\E$   vanishes at $\widetilde{\theta}$, i.e., all directions lying in  $L_{\widetilde{\theta}}$ and, in particular, $\Sigma$,  are characteristic ones. We conclude that   
$
\Sigma_{[1]}\E\supseteq \check{\P}D
$, 
as desired.\par
The second step is almost self--evident. Indeed, by comparing \eqref{eqElleThetaTilde} and \eqref{eqDefDi}, one realizes  not only that any $\Sigma\in \check{\P}D$ is contained into the singularity equation $\Sigma_{[1]}\E$, but also  that this very $\Sigma$ sits into an integral plane $  L_{\widetilde{\theta}}$ which is \emph{entirely contained} into $D$. Hence, $\Sigma$ belongs to a one--parametric family $\{\Sigma_t\}\subseteq \Sigma_{[1]}\E$   each member of which lies into the same $  L_{\widetilde{\theta}}$. But $\{\Sigma_t\}$ is an integral curve of the polar distribution, and it develops within $ \Sigma_{[1]}\E$: hence,
\begin{equation}\label{eqDimIntersezMagUno}
\dim P|_{\Sigma_{[1]}\E}\geq 1,
\end{equation}
as promised.\par
Turn now to the third and last step and, towards an absurd, suppose   that $\E$ is quasi--linear: hence, its symbol is constant along the fiber $\E_\theta$. This fact reflects on the structure of the singularity equation $\Sigma_{[1]}\E$. Indeed, since   \emph{all} integral planes $L_{\widetilde{\theta}}$, with $\widetilde{\theta}\in \E_\theta$, are mutually and canonically identified each other, then it makes sense to claim that the intersection
\begin{equation}\label{eqIntersezioneSingPElle}
\Sigma_{[1]}\E \cap \P L_{\widetilde{\theta}}=\{ \Sigma=\Span{\ell}\mid q(\ell)=0 \}
\end{equation}
is made of the \emph{same} three elements (possibly counted with multiplicity), which are the (real) roots of a fixed, i.e., not depending on $\widetilde{\theta}$, $3\Rd$ order homogeneous polynomial $q$, which is the (in this sense, constant)   symbol of $\E$. The same reasoning as above shows that any integral curve of $P$, passing through $\Sigma\in \Sigma_{[1]}\E$ is tangent to $\P L_{\widetilde{\theta}}$, but the intersection of the latter with $\Sigma_{[1]}\E$ is 0--dimensional in view of  \eqref{eqIntersezioneSingPElle}, and hence
$
\dim P|_{\Sigma_{[1]}\E}=0$,  
contradicting \eqref{eqDimIntersezMagUno}.
\end{proof}
\subsection{Concluding remarks and perspectives}
The celebrated 1995 proof, due Bryant \& Griffiths, of an old conjecture of Sophus Lie shows that, under mild restrictions, any parabolic Monge--Amp\`ere equation is contact--equivalent to a quasi--linear one 
\cite{MR1334205}.  Example \ref{exFinale} above may be thought of as a negative counter--example of an analogous property, but formulated in the context of $3\Rd$ order Monge--Amp\`ere equations, which is an area little explored so far (see \cite{3OrdMAEs} and references therein).\par
In spite of its simplified settings (low order PDEs, two independent variables, and $s=1$), Example \ref{exFinale} should help to realize what is the applicative range   of the proposed geometric framework for the non--maximal integral  elements $\C$.
%
%
%
%
%
%

\end{document}